      \theoremstyle{plain}
      \newtheorem{theorem}{Theorem}[section]
      \newtheorem{lemma}[theorem]{Lemma}
      \newtheorem{proposition}[theorem]{Proposition}
      \newtheorem{conjecture}[theorem]{Conjecture}
      \theoremstyle{definition}
      \theoremstyle{remark}
      \newcommand{\R}{\mathbb {Z}^{d}}
\begin{document}

%


\title[Almost exponential decay for ballistic RWRE]{Almost exponential decay for the exit probability
from slabs of ballistic RWRE}


   \author{Enrique Guerra$^{1}$ and Alejandro F. Ram\'\i rez$^{1,2}$
  }

\thanks{
$^1$ Partially supported by Iniciativa Cient\'\i fica Milenio NC120062}

\thanks{
$^2$ Partially supported  by Fondo Nacional de Desarrollo Cient\'\i fico
y Tecnol\'ogico grant 1141094.}

\email{eaguerra@mat.uc.cl, aramirez@mat.puc.cl}
\address{ Facultad de Matem\'aticas\\
Pontificia Universidad Cat\'olica de Chile\\
Casilla 306-Correo 22, Santiago 6904411, Chile\\
T\'el\'ephone: [56](2)354-5466\\
T\'el\'efax: [56](2)552-5916}




   \begin{abstract}
It is conjectured that in dimensions $d\ge 2$
any random walk  in an i.i.d. uniformly elliptic
random environment (RWRE) which is directionally transient
is ballistic.
The ballisticity conditions for RWRE
 somehow interpolate between directional transience and
ballisticity and have served to quantify the gap which
would need to be proven in order to answer affirmatively
this conjecture.
Two important ballisticity conditions introduced by
Sznitman \cite{Sz02} in 2001 and 2002 are the so called conditions $(T')$
and $(T)$: given a slab of width $L$ orthogonal to $l$, condition $(T')$
in direction $l$
is the requirement that the annealed exit probability
of the walk through the side of the slab in the
half-space $\{x:x\cdot l<0\}$, decays faster than $e^{-CL^\gamma}$ for
all $\gamma\in (0,1)$ and some constant $C>0$, while
condition $(T)$ in direction $l$ is the requirement that
the decay is exponential $e^{-CL}$. It is believed that
$(T')$ implies $(T)$. In this article we  show that
$(T')$ implies at least an {\it almost} (in a sense to be
made precise) exponential decay.
   \end{abstract}

   \date{\today}


   \maketitle

\noindent {\footnotesize
{\it 2000 Mathematics Subject Classification.} 60K37, 82D30.

\noindent
{\it Keywords.} Random walk in random environment,
ballisticity conditions, effective criterion}


   \section{Introduction}

The relationship between directional transience and ballisticity for
random walks in random environment is one of the most challenging open
questions within the field of random media. In
the case of random walks in an i.i.d. random environment, several ballisticity
conditions have been introduced which quantify the exit
probability of the random walk through a given side of a slab
as its width $L$ grows, with the objective of understanding the
above relation. Examples of these ballisticity conditions
include Sznitman's $(T')$ and $(T)$ conditions \cite{Sz01,Sz02}.
It is conjectured that
condition $(T')$, which requires a
decay of exiting the slab through its back side
faster than $e^{-CL^\gamma}$, for all $\gamma>0$
and some constant $C>0$, is equivalent
to condition $(T)$, corresponding to exponential decay $e^{-CL}$. In this
article we  prove that
condition $(T')$ implies an {\it almost} exponential decay
of the corresponding exit probabilities.

   Let us introduce the random walk in random environment model.
For $x\in\R$ denote its euclidean norm by $|x|_2$.
Let $V:=\{e\in\mathbb Z^d: |e|_2=1\}$ be the set of canonical vectors. Introduce the set $\mathcal{P}$ whose elements are $2d-$vectors $p(e)_{e\,\in \,\R,\,|e|=1}$ such that

   $$p(e)\ge 0,\,\mbox{for all $e\,\in \, V$},\, \sum_{e\,\in \,\R,\,|e|=1}\,p(e)=1.$$
   We define an environment
$\omega:=\{\omega(x):x \in \R\}$ as an element of $\Omega:=
\mathcal{P}^{\R}$, where for each $x\in\R$,  $\omega(x)=\{\omega(x,e):e\in V\}\in
\mathcal{P}$. Consider a probability measure $\mathbb P$ on $\Omega$ endowed
with its canonical product $\sigma$-algebra, so that an environment
is now a random variable  such that the coordinates $\omega(x)$ are i.i.d. under $\mathbb P$. The random walk in the random environment $\omega$ starting from $x\in\R$ is the canonical Markov Chain $\{X_{n}:n \geq 0\}$ on $(\R)^{\mathbb{N}}$ with \textit{quenched law} $P_{x,\omega}$ starting from $x$,
defined by the transition probabilities for each $e\in\mathbb Z^d$ with $|e|=1$
by

$$P_{x,\omega}(X_{n+1}=X_{n}+e | X_{0},\ldots,X_{n})=\omega(X_{n},e)$$
and

$$P_{x,\omega}(X_{0}=x)=1.$$
The \textit{averaged} or \textit{annealed law}, $P_{x}$, is defined as the semi-direct product measure
   $$P_{x}=\mathbb{P} \times P_{x,\omega}$$
   on $\Omega \times (\R)^{\mathbb{N}}$.
Whenever there is a $\kappa>0$ such that

$$
\inf_{e,x}\omega(x,e)\ge\kappa\qquad\mathbb P-a.s.
$$
we will say that the law $\mathbb P$ of the environment is
{\it uniformly elliptic}.

    For the statement of the result, we need some further definitions.
For each subset $A\subset\mathbb Z^d$ we define the first exit time
of the random walk from $A$ as

$$
T_A:=\inf\{n\ge 0: X_n\notin A\}.
$$
Fix a vector $l\in \mathbb{S}^{d-1}$ and $u\in \mathbb{R}$ then define
the half-spaces $H_{u,l}^-:=\{x\in\mathbb Z^d: x\cdot l<u\}$, $H_{u,l}^+:=\{x\in\mathbb Z^d: x\cdot l>u\}$,

    $$T_{u}^{l}:=T_{H_{u,l}^-}=\inf\{n \geq 0,\,X_{n}\cdot l\geq u\}$$
    and
    $$\tilde{T}_{u}^{l}:=T_{H_{u,l}^+}=\inf\{n \geq 0,\,X_{n}\cdot l\leq u\}.$$
    For $\gamma \in (0,1]$, we say that condition $(T)_{\gamma}|l$ holds with respect to direction $l\in\mathbb{ S}^{d-1}$, if
    $$ \limsup_{L \rightarrow \infty}\,L^{\gamma}\,\log\, P_{0}( \tilde{T}_{-L}^{l'} < T_{L}^{l'} )<0, $$
    for all $l'$ in some neighborhood of $l$.
    Furthermore, we define $(T')|l$ as the requirement that condition $(T)_{\gamma}|l$ is satisfied for all $\gamma\in(0,1)$ and condition $(T)|_{l}$ as
the requirement that $(T)_{1}|l$ is satisfied. 
      In \cite{Sz02}, Sznitman proved that when $d\ge 2$ for every
$\gamma\in (0.5,1)$, $(T)_\gamma|l$ is equivalent to $(T')1l$. 
This equivalence was improved in \cite{DR11} and \cite{DR12}
culminating with the work of Berger, Drewitz and Ram\'\i rez who
in \cite{BDR14}
showed that for any $\gamma\in (0,1)$, condition $(T)_{\gamma}|l$ implies $(T')|l$.
As a matter of fact, in \cite{BDR14}, an effective ballisticity
condition, which requires polynomial decay was introduced.
To define this condition, consider $L,\tilde L>0$ and $l\in\mathbb S^{d-1}$
and the box

$$
B_{l,L,\tilde L}:=R\left((-L,L)\times (-\tilde L,\tilde L)^{d-1}
\right)\cap\mathbb Z^d,
$$
where $R$ is a rotation defined by

\begin{equation}
\label{rot}
R(e_{1})=l.
\end{equation}
Given $M\ge 1$ and $L\ge 2$, we say that the polynomial
condition $(P)_M$ in direction $l$ (also denoted by $(P)_M|l$) is satisfied on a box of size $L$
if there exists and $\tilde L\le 70 L^3$ such that

$$
P_0\left(X_{T_{B_{l,L,\tilde L}}}\cdot l<L\right)\le\frac{1}{L^M}.
$$
Berger, Drewitz and Ram\'\i rez proved in \cite{BDR14} that
there exists a constant $c_0$ such that
whenever $M\ge 15d+5$, the polynomial condition $(P)_M|l$
on a box of size $L\ge c_0$ is equivalent to condition $(T')|l$
(see also Lemma 3.1 of \cite{CR14}).
On the other hand, the following is still open.

\medskip

\begin{conjecture}
\label{conjecture1} Consider a random walk in a uniformly elliptic
random environment in dimension $d\ge 2$ and $l\in\mathbb S^{d-1}$. Then,
condition $(T)|l$ is equivalent to  $(T')|l$.
\end{conjecture}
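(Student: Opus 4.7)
The plan addresses the non-trivial implication $(T')|l \Rightarrow (T)|l$; the converse is immediate since $L^{\gamma} \leq L$ for $\gamma \in (0,1)$ and $L \geq 1$. My strategy is to reduce $(T)|l$ to finiteness of an annealed exponential moment for the first Sznitman--Zerner regeneration time $\tau_{1}$ in direction $l$. Once one knows that $E_{0}[e^{\lambda\,\tau_{1}}\mathbf{1}_{\tau_{1}<\infty}] < \infty$ for some $\lambda>0$, a classical Cram\'er-type argument applied to the i.i.d.\ increments $(X_{\tau_{k+1}}-X_{\tau_{k}})\cdot l$ under the annealed law converts this into genuine exponential decay of $P_{0}(\tilde T^{l'}_{-L} < T^{l'}_{L})$ in a full neighborhood of $l$, which is exactly $(T)|l$.

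To produce exponential moments of $\tau_{1}$, I would run a multi-scale renormalization on geometric scales $L_{k+1} = \rho\,L_{k}$ with a fixed dilation $\rho>1$. Writing $p_{k} := P_{0}(\tilde T^{l}_{-L_{k}} < T^{l}_{L_{k}})$ and starting from the polynomial bound $p_{0} \leq L_{0}^{-M}$ provided by the equivalence of $(T')|l$ with $(P)_{M}|l$ for $M \geq 15d+5$, the target is an inductive estimate of the shape
\[
p_{k+1} \leq C_{k}\, p_{k}^{\rho + \delta},
\]
with $\delta>0$ and with $C_{k}$ growing only polynomially in $L_{k}$. The inductive step would cut a trajectory that exits the larger slab at scale $L_{k+1}$ through its back at a suitable regeneration-type time at scale $L_{k}$, apply the strong Markov property to produce two near-independent exit events at scale $L_{k}$, and pay for the entropy of the possible splitting positions. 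Iterating such an inequality gives $-\log p_{k} \geq c\,L_{k}$ along the geometric scales, and, combined with the Sznitman--Zerner regeneration decomposition and the usual monotonicity in $L$ and stability under perturbations of the direction, this would upgrade the bound to all $L$ and all $l'$ in a neighborhood of $l$, yielding $(T)|l$.

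The main obstacle is the numerical value of the exponent in the recursion. In every renormalization scheme currently available in this context---Sznitman's effective criterion \cite{Sz02}, the polynomial-to-$(T')$ argument of \cite{BDR14}, and the almost-exponential bootstrap of the present article---the entropy of the splitting position contributes a multiplicative error of the form $L_{k}^{b}$ or $\exp(C\,L_{k}^{\beta})$ with $\beta<1$, and the effective exponent on $p_{k}$ that one manages to extract is at most $\rho$ rather than $\rho+\delta$. Closing this gap seems to require either an \emph{a priori} quantitative concentration for the environment seen from the particle at the regeneration time, of strictly stronger flavour than anything currently available under $(T')|l$, or a direct construction of a regeneration structure with exponential tails---which by the Sznitman--Zerner framework is essentially equivalent to $(T)|l$ itself. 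It is this impasse that makes Conjecture \ref{conjecture1} subtle, and this paper proceeds by settling for the weaker almost-exponential conclusion as an intermediate step toward eventually resolving it.
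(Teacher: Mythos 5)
You are addressing Conjecture \ref{conjecture1}, which is an \emph{open problem}: the paper does not prove it, and its actual contribution, Theorem \ref{theorem1}, is exactly the weaker almost-exponential decay obtained via the effective criterion, Proposition \ref{sznitman}, and a renormalization along fast-growing scales satisfying condition $(G)$. Your text is likewise not a proof. The entire argument hinges on the inductive inequality $p_{k+1}\le C_k\,p_k^{\rho+\delta}$ with $\delta>0$ and $C_k$ only polynomial in $L_k$, and this step is never established; as you yourself note, every known scheme (Sznitman's effective criterion \cite{Sz02}, the polynomial condition of \cite{BDR14}, and the present paper's bootstrap) produces an effective exponent of at most $\rho$ together with entropy factors that prevent the recursion from reaching a genuinely linear rate in $L_k$. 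Since that single step carries the whole implication $(T')|l\Rightarrow(T)|l$, the proposal has a genuine (and candidly acknowledged) gap, and what it would deliver without that step is nothing beyond the stretched/almost-exponential decay already known.

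There is also a concrete defect in the proposed reduction that would persist even if the recursion could be closed: you reduce $(T)|l$ to finiteness of an annealed exponential moment $E_0[e^{\lambda\tau_1}]$ of the regeneration time. Under uniform ellipticity alone this is in general \emph{false}, even for walks that do satisfy $(T)|l$: for suitable uniformly elliptic i.i.d.\ laws one can create an atypical trapping region of radius $R\approx n^{1/(d+2)}$ at annealed cost of order $e^{-cR^d}$ and confine the walk there up to time $n$, which yields $P_0(\tau_1>n)\ge\exp\{-c\,n^{d/(d+2)}\}$ and hence $E_0[e^{\lambda\tau_1}]=\infty$ for every $\lambda>0$. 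In the Sznitman--Zerner framework, condition $(T)|l$ corresponds to exponential tails of the regeneration \emph{displacement} $X_{\tau_1}\cdot l$, not of the regeneration \emph{time}; so the quantity your Cram\'er-type argument should target is the displacement, and aiming at exponential moments of $\tau_1$ makes your intermediate goal strictly stronger than the conjecture and generally unattainable. The paper avoids regeneration times altogether: it works directly with the exit quantities $\varphi_k$ of Lemma \ref{induction}, chooses scales via Lemma \ref{escale}, and in Proposition \ref{final-prop} converts the resulting estimates into the effective parameter of Theorem \ref{theorem1}, explicitly leaving Conjecture \ref{conjecture1} open.
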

\medskip

To quantify how far are we presently from proving Conjecture
\ref{conjecture1}, we will introduce now
a family of intermediate conditions between conditions $(T')$
and $(T)$. Let $\gamma(L):[0,\infty)\to [0,1]$, with
$\lim_{L\to\infty}\gamma(L)=1$. Let $l\in\mathbb S^d$.
We say that
condition $(T)_{\gamma(L)}|_l$
is satisfied if

$$
\limsup_{L\to\infty}L^{\gamma(L)}\log  P_{0}( \tilde{T}_{-L}^{l'} < T_{L}^{l'} )<0,
$$
for $l'$ in a neighborhood of $l$.
We will call $\gamma(L)$ the {\it effective parameter} of
 condition $(T)_{\gamma(L)}$.
Note that condition $(T)$ is actually equivalent to
$(T)_{\gamma(L)}$ with an effective parameter given by

\begin{equation}
\label{gammat}
\gamma(L)=1-\frac{C}{\log L},
\end{equation}
for any constant $C\ge 0$.
 In 2002 Sznitman \cite{Sz02} was able to prove that
$(T')$ implies  $(T)_{\gamma(L)}$ with
effective parameter

\begin{equation}
\label{gammasznitman}
\gamma(L)=1-\frac{C}{\log L}\sqrt{\log L},
\end{equation}
for some constant $C>0$.

In this paper, we are able to show that
condition $(T')$ implies condition $(T)_{\gamma(L)}$
with an effective parameter $\gamma(L)$ which
is closer to the effective parameter for condition $(T)$
given by (\ref{gammat}). This is the first result
since the introduction of condition $(T')$ by Sznitman
in 2002, which would give an indication that
Conjecture \ref{conjecture1} is true.
To state it,
let us introduce some notation.
Throughout, for each $n\ge 1$, we will
use the standard notation

$$
\overbrace{\log\circ\cdots\circ\log}^n x,
$$
for the composition of the logarithm function $n$ times
with itself, for all $x$ in its domain.
where the $n$ superscript means that the composition is performed $n$ times.

\bigskip

   \begin{theorem}
\label{theorem1} Let $d\ge 2$, $l\in S^{d-1}$ and $M\ge 15d+5$.
Assume that  condition $(P)_M|l$ is satisfied
on a box of size $L\ge c_0$.
Then there exists a constant $C>0$ and a function $n(L): [0,\infty)\to\mathbb N$
satisfying $\lim_{L\to\infty}n(L)=\infty$,
such that condition $(T)_{\gamma(L)}|_l$ is
satisfied with an effective parameter $\gamma(L)$ given by

$$
\gamma(L)=1-\frac{C}{\log L}\overbrace{\log\circ\cdots\circ\log}^{n(L)}
L.
$$
   \end{theorem}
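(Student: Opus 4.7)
\textit{Overall strategy.} The plan is to prove Theorem \ref{theorem1} by iterating a Sznitman-type renormalization bootstrap in the spirit of \cite{Sz02}, which converts a decay estimate at scale $L_k$ into a sharper one at a larger scale $L_{k+1}$. The base of the iteration is furnished by the Berger--Drewitz--Ram\'\i rez equivalence from \cite{BDR14}: since $(P)_M|l$ with $M\ge 15d+5$ is equivalent to $(T')|l$, we may start from a stretched exponential decay
\[
P_0\bigl(\tilde T_{-L_0}^{l'} < T_{L_0}^{l'}\bigr) \le e^{-cL_0^{\gamma_0}}
\]
at some base scale $L_0$, with $\gamma_0<1$ as close to $1$ as we wish. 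After $n(L)$ successive applications of the renormalization step, with $n(L)\to\infty$ slowly, the goal is to land at scale $L$ with exponent $\gamma(L) = 1 - C\,\overbrace{\log\circ\cdots\circ\log}^{n(L)} L\, /\,\log L$.

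\textit{The renormalization step and its recursion.} Given the bound $P_0(\tilde T_{-L_k}^{l'} < T_{L_k}^{l'}) \le e^{-cL_k^{\gamma_k}}$ at level $k$, I would tile a slab of width $L_{k+1}\gg L_k$ by sub-slabs of width $L_k$ and couple a backward crossing of the large slab to a sequence of backward crossings of the small ones. A union bound, combined with Sznitman's atypical-quenched-exit-measure estimates controlling the combinatorial entropy of such sequences, yields a new bound of the form $e^{-c'L_{k+1}^{\gamma_{k+1}}}$. A careful inspection of the constants shows that the residual deficits $\delta_k := 1-\gamma_k$ obey, roughly,
\[
\delta_{k+1}\;\approx\;\frac{\log(1/\delta_k)}{\log L_{k+1}} .
\]
Unrolled over $n$ steps, with the scales $L_k$ chosen so that $L_n=L$ on the last step, this recursion produces precisely $\delta_n \approx \overbrace{\log\circ\cdots\circ\log}^{n} L \,/\,\log L$, which is exactly the claimed effective parameter with $n=n(L)$.

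\textit{Iteration, transfer, and main obstacle.} The admissible number of iterations $n(L)$ is limited by the degradation of the multiplicative constants $c_k,c_k'$ and by the requirement that the tiling union bound remain effective at each level (the per-tile failure probability must dominate the combinatorial entropy). A careful bookkeeping should permit $n(L)\to\infty$ provided it grows slowly enough that the accumulated prefactors do not overwhelm the per-step gain. The extension from $l$ to $l'$ in a neighborhood of $l$ is then standard, since the renormalization step is robust under small perturbations of the direction and the very definition of $(T)_{\gamma(L)}|l$ is an open condition in $l$. The principal difficulty I anticipate is to run the recursion for $n(L)\to\infty$ steps without stalling: Sznitman's bound $\gamma(L)=1-C\sqrt{\log\log L}/\log L$ from \cite{Sz02} corresponds essentially to a \emph{single} application of the above recursion, and the novelty is to show that repeated applications genuinely compound the logarithms into $\overbrace{\log\circ\cdots\circ\log}^{n(L)} L$ rather than merely accumulating multiplicative losses. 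This hinges on an optimal choice of the scale ratios $L_{k+1}/L_k$ at each level together with a quantitative stability estimate for the per-step improvement as $k$ grows, which appears to be the technical heart of the argument.
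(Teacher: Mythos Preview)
Your proposal misidentifies the mechanism that produces the iterated logarithm, and the recursion you write down does not correspond to any step that can actually be executed.

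Sznitman's renormalization in \cite{Sz02} does not take a stretched-exponential exponent $\gamma_k$ as input and return an improved $\gamma_{k+1}$. It takes as input the \emph{effective criterion}---a single inequality at one fixed base scale $L_0$ on the moments of $\rho_0$---and from that alone runs a chain of scales $L_0<L_1<\cdots$ via the one-dimensional-type estimate (the paper's Proposition on $\mathbb{E}[\rho_1^{a/2}]$ in terms of $\mathbb{E}[\rho_0^a]$ and $\mathbb{E}[q_0]$). The output $\gamma(L)$ is determined by how fast the scales $L_k$ are allowed to grow while keeping the inductive bound $\varphi_k\le\kappa^{u_kL_k}$ alive. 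If you ``iterate'' the whole argument as you propose, the input at each outer step is again just the effective criterion, which has not improved, so you get the same $\gamma(L)$ out every time. There is no bootstrap of the form $\delta_{k+1}\approx \log(1/\delta_k)/\log L_{k+1}$; your recursion has no source.

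The paper's actual idea is orthogonal to iteration: it runs a \emph{single} renormalization chain from the effective criterion, but replaces Sznitman's essentially constant scale ratio $N_k$ by a sequence growing like a tower of exponentials, namely $N_k\sim f_{[(k+2)/2]}([(k+1)/2])/f_{[(k+1)/2]}([k/2])$ with $f_{j+1}=f_j\circ f_1$ and $f_1(x)=v^x$. The technical work is to show such scales still satisfy a ``maximal growth'' condition $(G)$ ensuring the induction $\varphi_k\le\kappa^{u_kL_k}$ survives (two competing inequalities, one limiting $N_k$ from below, one from above). Once that is done, for $L_k\le L<L_{k+1}$ one has $k/4\le \log_8^{([(k+1)/2])}L$, which is exactly how the iterated logarithm appears: $n(L)=[(k+1)/2]$, not a count of outer iterations but the depth of the exponential tower defining the scales. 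A secondary subtlety---absent from your sketch---is that because the scales now jump enormously, one must handle separately the case where $L$ lands far from $L_k$ inside the window $[L_k,L_{k+1})$, by inserting an auxiliary intermediate scale $m_kL_k$.
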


\bigskip

Let us remark that a priori, even if $n(L)\to\infty$ as
$L\to\infty$, it might happen that the
composition of the logarithm $n(L)$ time is bounded.
Nevertheless, in the case of Theorem \ref{theorem1}, it turns out that

$$
\lim_{L\to\infty}\overbrace{\log\circ\cdots\circ\log}^{n(L)}L=\infty.
$$
Theorem \ref{theorem1} will be proven in the next section, but some remarks are in order.
    The strategy  followed in the proof, roughly speaking, is to use improve the renormalization procedure used by Sznitman
in \cite{Sz02}, to prove $(T)_{\gamma(L)}$ with
$\gamma(L)$ given by (\ref{gammasznitman}) , through the so called
effective criterion. Essentially, our modification
of such a renormalization scheme, is to work with a sequence of
boxes    growing much faster than in Sznitman's approach.
The use of this new sequence of scales, produces
at some points important difficulties in the
proof which have to be properly handled.

   \section{Proof of Theorem \ref{theorem1}}
Throughout the rest of this section, we prove Theorem \ref{theorem1}.
Firstly, in subsection \ref{prelimin}, we will introduce the
basic notation which will be needed to implement the renormalization
scheme, and we will recall a basic result of Sznitman which
provides a bound for quantities involving the exit probability
through the unlikely side of boxes which are one-dimensional in spirit.
In the second subsection, we will introduce a growth condition
which will limit the maximal way in which the scales on the
renormalization scheme can grow, while still giving a useful
recurrence. In the third subsection we will choose an
adequate sequence of scales satisfying the condition of subsection
\ref{growth-condition}, and for which one can make computations. Finally,
in subsection \ref{effective-theorem}, Theorem \ref{theorem1} will be proven
using the scales constructed in subsection \ref{adequate} through
the use of the effective criterion.

\subsection{Preliminaries and notation}
\label{prelimin}
The proof of Theorem \ref{theorem1} will follow the
renormalization method used by Sznitman to prove Proposition 2.3
of \cite{Sz02}. The idea is to use a renormalization
procedure which somehow mimics a one-dimensional computation,
where one go from one scale to the next (larger) one
through one-dimensional formulas where the exit
probabilities of the random walk through slabs at the
smaller scales are involved.

 Following  Sznitman we introduce boxes transversal to direction $l$, which are specified in terms of $\mathcal{B}=(R,L, L', \tilde{L})$, where $L, L', \tilde{L}$ are positive numbers and $R$ is the rotation defined in (\ref{rot}).
    The box attached to $\mathcal{B}$, is
    $$B:=R((-L,L')\times(-\tilde{L},\tilde{L})^{d-1}) \cap \R $$
    and the positive part of its boundary is defined as
    $$ \partial_+ B:= \partial B \cap \{x \in \R, \, x \cdot l\geq L', \, |R(e_{i}) \cdot x|<\tilde{L},\,i\geq2\}.$$
We can now define the following random variable depending
on a given specification $\mathcal B$, analogous to
the quotient in dimension $d=1$ between the probability to jump
to the left and the probability to jump to the right \cite{SW69,So75},
for $\omega\in\Omega$ as

    $$\rho_{\mathcal{B}}(\omega):=\frac{q_{\mathcal{B}}(\omega)}{p_{\mathcal{B}}(\omega)},$$
    where

    $$q_{\mathcal{B}}(\omega):=P_{0,\omega}(X_{T_{B}}\notin \partial_+ B  )=:1-p_{\mathcal{B}}(\omega).$$
    The first step in the renormalization procedure will be to control
the moments of $\rho_{\mathcal B}$ at the two first scales. For this end, consider positive numbers
    $$3\sqrt{d}<L_{0}<L_{1},\,\,\,3\sqrt{d}<\tilde{L}_{0}< \tilde{L}_{1}$$
    along with the box-specifications

$$\mathcal{ B}_{0}:= (R,L_{0}-1, L_{0}+1, \tilde{L}_{0})$$
and
$$\mathcal{ B}_{1}:= (R,L_{1}-1, L_{1}+1, \tilde{L}_{1}).$$
  It is convenient to introduce now the notation

 $$q_{0}:=q_{\mathcal B_{0}},\,p_{0}:=p_{\mathcal B_{0}},\qquad
q_{1}:=q_{\mathcal B_{1}},\,p_{1}:=p_{\mathcal B_{1}},$$
and

\begin{equation}
\label{citerho0}
\rho_0:=\rho_{\mathcal B_0}, \ \rho_1:=\rho_{\mathcal B_1}.
\end{equation}
Let also

    $$N_{0}:=\frac{L_{1}}{L_{0}}\quad {\rm and}\ \ \tilde{N}_{0}:=\frac{\tilde{L}_{1}}{\tilde{L}_{0}}.$$
We will also need to introduce the constant
\begin{equation}
\nonumber
c_1(d)=c_1:=\sqrt{d}.
\end{equation}
Note that
for each pair of points $x,y\in\mathbb Z^d$, there exists a
nearest neighbor path
joining them which has less than $c_1|x-y|_2$ steps.\\

  Let us now recall the following Proposition of Sznitman \cite{Sz02}.
\bigskip

    \begin{proposition}
\label{sznitman}    There exist $c_{2}(d)>3\sqrt{d}$, $c_{3}(d), c_{4}(d)>1$, such that when $N_{0}\geq 3, L_{0}\geq c_{2}, \tilde{L}_{1} \geq  48 N_{0}\tilde{L}_{0}$, for each $a\in(0,1]$ one has that


    \begin{eqnarray}
\nonumber
&\mathbb{E}\left[\rho_{1}^{\frac{a}{2}}\right]\le
c_{3}\left\{
 \kappa^{-10c_{1}L_{1}}\left(c_{4}\tilde{L}_{1}^{d-2}\frac{L_{1}^{3}}{L_{0}^{2}}\tilde{L}_{0} \mathbb{E}[q_{0}]\right)^{\frac{\tilde L_{1}}{12N_{0}\tilde L_{0}}}\right.\\
&\label{one}
+\left.\sum\limits_{0\leq m\leq N_0+1}\left(c_{4}\tilde{L}_{1}^{d-1} \mathbb{E}[\rho_{0}^{a}]
\right)^{\frac{[N_{0}]+m-1}{2}}\right\}.
\end{eqnarray}
    \end{proposition}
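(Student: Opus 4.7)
The plan is to mimic, through a multi-scale decomposition, the classical one-dimensional Solomon-type computation, reducing an annealed moment of $\rho_1$ at the coarse scale to expectations involving $\rho_0$ and $q_0$ at the finer scale. I would tile the transversal slab of width $2\tilde L_1$ into disjoint transversal base cells of side roughly $2\tilde L_0$, and along each such cell consider a column of disjoint copies of $B_0$ obtained by translating $B_0$ in direction $l$ by multiples of $2L_0$. Inside any fixed column I would introduce successive entry times into these small boxes and classify each visit as \emph{good} (the walk exits through the forward face of the small box, joining the next box in the column) or \emph{bad} (the walk exits transversally). Since distinct small boxes have $\mathbb P$-independent environments, each prescribed good/bad pattern has annealed probability that factors as a product of $\mathbb E[\rho_0^{a}]$-type and $\mathbb E[q_0]$-type factors.

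Next I would rewrite the event $\{X_{T_{B_1}}\notin\partial_+ B_1\}$ as a disjoint union over such patterns. For patterns consisting only of \emph{good} excursions the quenched walk behaves as a one-dimensional random walk with transition ratios $\rho_0$ on a segment of length $N_0+m-1$; its probability of backward exit is then bounded by $(\mathbb E[\rho_0^a])^{(N_0+m-1)/2}$ after applying Chebyshev with exponent $a/2$ to $\rho_1$ and using the independence across small boxes, yielding the second term of (\ref{one}) once summed over $0\le m\le N_0+1$ and after inclusion of the $\tilde L_1^{d-1}$ combinatorial factor arising from the choice of transversal column. For patterns featuring at least one \emph{bad} excursion I would pay a factor $\mathbb E[q_0]$ for the transversal escape and then use uniform ellipticity to forcibly drive the walk to $B_1^c$, at cost $\kappa^{-10c_1L_1}$ per forced nearest-neighbor path of length at most $c_1L_1$; arranging roughly $\tilde L_1/(12N_0\tilde L_0)$ independent transversal escape events, each with $c_4\tilde L_1^{d-2}(L_1^3/L_0^2)\tilde L_0$ possible configurations, produces the first term of (\ref{one}).

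The main obstacle will be the bookkeeping of independence: guaranteeing that all small boxes appearing in the factorization are $\mathbb P$-disjoint for \emph{every} relevant realization of the walk. This is exactly what forces the separation hypothesis $\tilde L_1\ge 48 N_0\tilde L_0$, which in turn must be combined with the Chebyshev step going from $\rho_1^{a/2}$ to $\rho_0^{a}$ so that the resulting exponent $(N_0+m-1)/2$ and the prefactor $\tilde L_1/(12N_0\tilde L_0)$ match those displayed in (\ref{one}). Tracking the combinatorial factors coming from the number of possible transversal positions of bad excursions, while making sure the auxiliary ellipticity-paths used to splice independent regions never revisit the small boxes already employed in the factorization, is the most delicate part of the argument.
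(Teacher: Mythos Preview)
The paper does not provide a proof of this proposition; it is introduced with the sentence ``Let us now recall the following Proposition of Sznitman \cite{Sz02}'' and is treated as a black box imported from that reference. There is therefore no in-paper proof to compare against.

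Your sketch is broadly in the spirit of Sznitman's original argument in \cite{Sz02}: the tiling of $B_1$ by translated copies of $B_0$, the one-dimensional reduction along a column of such boxes (which, after independence and the passage from exponent $a/2$ to $a$, produces the sum over $m$ with exponent $([N_0]+m-1)/2$), the dichotomy between longitudinal exits and transversal escapes governed by $q_0$, and the use of uniform ellipticity to pay $\kappa^{-O(L_1)}$ for splicing paths between disjoint regions are all ingredients of that proof. The bookkeeping concerns you flag---ensuring the small boxes used in the factorization are environment-disjoint, and that the auxiliary ellipticity paths do not revisit them---are indeed the technical crux in \cite{Sz02}, and the hypothesis $\tilde L_1\ge 48 N_0\tilde L_0$ is exactly what provides enough transversal room for this. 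If you want the details, they are in the proof of Proposition~2.1 of \cite{Sz02}; the present paper does not reproduce them.
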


\subsection{The maximal growth condition on scales}
\label{growth-condition}

   We next recursively iterate inequality (\ref{one})
at different  scales which will increase as fast as possible,
in the sense that a certain induction
condition should enable us to push forward the recursion.

   We next recursively iterate inequality (\ref{one})
at different  scales which will increase as fast as possible,
in the sense that a certain induction
hypothesis should enable us to push forward the recursion.
   Let

\begin{equation}
\nonumber
v:=8, \,\,\alpha:=240
\end{equation}
   and  introduce two sequences of scales
 $L_{k}, \tilde{L}_{k}\,\,\,k\geq0$, such that

 \begin{equation}
\label{eletilde}
L_{0}\geq c_{2}\,, 3\sqrt{d}<\tilde{L}_{0}\leq L_{0}^{3}
\end{equation}
and for $k\ge 0$

\begin{equation}
\label{nklk}
N_{k}\geq 7,\,L_{k+1}=N_{k}L_{k},\,\tilde{L}_{k+1}=N_{k}^{3}\tilde{L}_{k},
\end{equation}
   as well as box-specifications

$$\mathcal{ B}_{k}:= (R,L_{k}-1, L_{k}+1, \tilde{L}_{k}).$$
Note that

\begin{equation}
\label{elektilde}
\tilde{L}_{k+1}=\left(\frac{L_k}{L_0}\right)^{3}\tilde{L}_{0}.
\end{equation}
Introduce also the notation for the respective attached random variables
$$\rho_k:=\rho_{\mathcal B_k}.$$
Throughout, we will adopt the notation

\begin{equation}
\label{ucero}
u_0:=\frac{3(d-1)}{L_0\log\frac{1}{\kappa}},
\end{equation}
and for $k\ge 1$,

$$
u_k:=\frac{u_{0}}{v^{k}}.
$$
We also let

   $$c_5:=2c_{3}c_{4}.$$

\medskip
\noindent {\bf Condition $(G)$.} {\it
We say that the scales $L_k,N_k, k\ge 0$
satisfy condition $(G)$  if

\begin{equation}
\label{g1}
u_{k}N_{k}\geq \alpha c_{1}\,\mbox{{\rm for} $k \geq 0$,}
\end{equation}
and if

\begin{equation}
\label{g2}
c_{5}N_{k+1}^{3(d-1)}L_{k+1}^{3d-1}\kappa^{u_{k+1}L_{k+1}} \leq 1 \,\mbox{for $k \geq 0$.}
\end{equation}

}

\medskip
Let us now state the following lemma which generalizes Lemma 2.2 of
Sznitman (\cite{Sz02}), for scales satisfying  condition $(G)$.
For completeness we include its proof.

\medskip
   
   \begin{lemma}
\label{induction} Consider scales $L_k, N_k, k\ge 0$,
such that
 condition $(G)$ is satisfied. Then,  whenever  $L_{0}\geq c_{2}$, $3\sqrt{d}\leq \tilde{L_{0}}\leq L_{0}^{3}$,
and   $a_{0}\in(0,1]$, we have that

\begin{equation}
\label{phi0}
\varphi_{0}:= c_{4}\tilde{L}_{1}^{d-1}L_{0}\mathbb{E}[\rho_{0}^{a_{0}}]\leq \kappa^{u_{0}L_{0}}.
\end{equation}
   then for all $k\geq 0$,

\begin{equation}
\label{phik}
\varphi_{k}:= c_{4}\tilde{L}_{k+1}^{d-1}L_{k}\mathbb{E}[\rho_{k}^{a_{k}}]\leq \kappa^{u_{k}L_{k}}.
\end{equation}
   with
   $$a_{k}=a_{0}2^{-k},\,\,u_{k}=u_{0}v^{-k}.$$

   \end{lemma}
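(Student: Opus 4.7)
My plan is to prove \eqref{phik} by induction on $k$, with base case $k=0$ being exactly the hypothesis \eqref{phi0}. For the inductive step $k \to k+1$, I apply Proposition \ref{sznitman} under the substitution $(L_0, L_1, \tilde L_0, \tilde L_1, N_0, a) \mapsto (L_k, L_{k+1}, \tilde L_k, \tilde L_{k+1}, N_k, a_k)$. Its hypotheses all hold: $L_k \geq L_0 \geq c_2$ by \eqref{eletilde}, $N_k \geq 7 \geq 3$ by \eqref{nklk}, and $\tilde L_{k+1} = N_k^3 \tilde L_k \geq 48 N_k \tilde L_k$ since $N_k^2 \geq 49$. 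Since $a_k/2 = a_{k+1}$, inequality \eqref{one} yields $\mathbb{E}[\rho_{k+1}^{a_{k+1}}] \leq c_3(T_1 + T_2)$, where $T_1, T_2$ denote the two terms on the right of \eqref{one} at this scale; it then suffices to bound $\varphi_{k+1} = c_4 \tilde L_{k+2}^{d-1} L_{k+1} \mathbb{E}[\rho_{k+1}^{a_{k+1}}]$ by $\kappa^{u_{k+1} L_{k+1}}$.

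To handle $T_1$, I would first observe that $p_k \leq 1$ gives $q_k \leq \rho_k$, while $a_k \leq 1$ together with $q_k \leq 1$ gives $q_k \leq q_k^{a_k}$, whence $\mathbb{E}[q_k] \leq \mathbb{E}[\rho_k^{a_k}] \leq \varphi_k/(c_4 \tilde L_{k+1}^{d-1} L_k)$. Plugging the inductive hypothesis into the base of $T_1$ and using $L_{k+1} = N_k L_k$, $\tilde L_{k+1} = N_k^3 \tilde L_k$, the polynomial factors telescope exactly, leaving $\kappa^{u_k L_k}$ inside the $\tilde L_{k+1}/(12 N_k \tilde L_k) = N_k^2/12$ power. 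This yields $T_1 \leq \kappa^{L_k N_k(u_k N_k/12 - 10 c_1)}$, and condition \eqref{g1} ($u_k N_k \geq 240 c_1$) gives $u_k N_k/12 - 10 c_1 \geq u_k N_k/24$, pushing the exponent above $2 u_{k+1} L_{k+1}$. For $T_2$, setting $B := c_4 \tilde L_{k+1}^{d-1} \mathbb{E}[\rho_k^{a_k}] = \varphi_k/L_k \leq \kappa^{u_k L_k}/L_k$, the sum is dominated by $(N_k+2) B^{(N_k-1)/2}$; the $\kappa$-exponent is $u_k L_k (N_k-1)/2 \geq 2 u_{k+1} L_{k+1}$ for $N_k \geq 2$, while $L_k^{-(N_k-1)/2}$ handily absorbs the polynomial prefactor.

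The last step is to multiply $T_1 + T_2$ by the outer factor $c_3 c_4 \tilde L_{k+2}^{d-1} L_{k+1}$. From \eqref{elektilde} together with $\tilde L_0 \leq L_0^3$ one obtains $\tilde L_{k+1} \leq L_{k+1}^3$, hence $\tilde L_{k+2}^{d-1} \leq N_{k+1}^{3(d-1)} L_{k+1}^{3(d-1)}$, so with $c_5 = 2 c_3 c_4$ this prefactor is bounded by $(c_5/2) N_{k+1}^{3(d-1)} L_{k+1}^{3d-1}$, which condition \eqref{g2} forces to be at most $(1/2) \kappa^{-u_{k+1} L_{k+1}}$. Combined with an estimate of the form $T_1 + T_2 \leq 2 L_{k+1}\kappa^{2 u_{k+1} L_{k+1}}$, this proves $\varphi_{k+1} \leq \kappa^{u_{k+1} L_{k+1}}$ and closes the induction.

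The main technical obstacle I foresee is the tight bookkeeping of constants: \eqref{g1} is engineered to provide the multiplicative slack in the $\kappa$-exponents of $T_1$ and $T_2$, while \eqref{g2} absorbs all the polynomial prefactors, and one must verify that the specific values $v=8$ and $\alpha=240$, together with the halving $a_{k+1}=a_k/2$ and the geometric decay $u_{k+1}=u_k/v$, leave a strictly positive margin at every step. Everything else is a straightforward consequence of applying Proposition \ref{sznitman} and carefully tracking the exponents of $\kappa$ against the polynomial corrections.
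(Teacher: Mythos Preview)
Your proposal is correct and follows essentially the same route as the paper: induction on $k$ via Proposition~\ref{sznitman}, using \eqref{g1} to create the multiplicative margin in the $\kappa$-exponent and \eqref{g2} to absorb the polynomial prefactor $c_5 N_{k+1}^{3(d-1)}L_{k+1}^{3d-1}$. The only cosmetic difference is that the paper packages both terms of \eqref{one} directly in terms of $\varphi_k$ (obtaining $\varphi_{k+1}\le c_5\tilde L_{k+2}^{d-1}L_{k+1}^2\,\varphi_k^{N_k/8}\varphi_k^{N_k/8}$ and then splitting off one factor $\varphi_k^{N_k/8}\le\kappa^{u_{k+1}L_{k+1}}$), whereas you keep $T_1,T_2$ separate and bound each by $\kappa^{2u_{k+1}L_{k+1}}$ before applying \eqref{g2}; the arithmetic is the same.
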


   \begin{proof}
As in the proof of Lemma 2.2 of \cite{Sz02}, we can conclude   by Proposition \ref{sznitman} that  if $L_o \geq c_2$ (note that by the choice of $N_k$ in (\ref{nklk}), the
other conditions of Proposition \ref{sznitman} are satisfied)  we have that
for $k\ge 0$,

\begin{equation}
\label{phi2}
\varphi_{k+1}\leq c_{3}c_{4}\tilde{L}_{k+2}^{d-1}L_{k+1}\left\{\kappa^{-10c_{1}L_{k+1}}\varphi_{k}^{\frac{N_{k}^{2}}{12}}+\sum_{0\leq m\leq N_{k}+1}\varphi_{k}^{\frac{[N_{k}]+m-1}{2}}\right\}.
\end{equation}
We will now prove inequality (\ref{phik}) by induction on $k$
using inequality (\ref{phi2}).
Since inequality (\ref{phi0}) is identical to inequality (\ref{phik}) with
$k=0$, the induction hypothesis is satisfied for $k=0$. We assume now that it
is true  for $k>0$, along with  inequality (\ref{g1}) of assumption $(G)$
and conclude that

\begin{equation}
\label{uno}
\kappa^{-10c_{1}L_{k+1}}\varphi_{k}^{\frac{N_{k}^{2}}{24}}\leq \kappa^{-10c_{1}L_{k+1}}\kappa^{N_{k}^{2}\frac{L_{k}u_{k}}{24}}\leq 1.
\end{equation}
   Therefore, using (\ref{uno}) and
the fact that $[N_{k}]-1\geq \frac{N_{k}}{2}$ because $N_{k}\geq 7$ we see that

\begin{eqnarray}
\nonumber
&\varphi_{k+1}\leq c_{3}c_{4}\tilde{L}_{k+2}^{d-1}L_{k+1}\left\{ \varphi_{k}^{\frac{N_{k}^{2}}{24}}+L_{k+1}\varphi_{k}^{\frac{N_{k}}{4}}\right\}\\
\label{dos}
&\leq c_{5}\tilde{L}_{k+2}^{d-1}L_{k+1}^{2}\varphi_{k}^{\frac{N_{k}}{8}}\varphi_{k}^{
\frac{N_{k}}{8}},
\end{eqnarray}
where we recall that $c_5=2c_3c_4$.
  Now, by the induction hypothesis (\ref{phik}) we see that
    $$\varphi_{k}^{\frac{N_{k}}{8}}\leq \kappa^{u_{k+1}L_{k+1}}.$$
  Substituting this into (\ref{dos}), we see that it is
enough now to show that
    $$c_{5}\tilde{L}_{k+2}^{d-1}L_{k+1}^{2}\varphi_{k}^{\frac{N_{k}}{8}}\leq 1.$$
 But this is true, by (\ref{g2}) of condition $(G)$,
the induction hypothesis and the inequality $\tilde L_{k+1}\le L^3_{k+1}$
for $k\ge 0$ which follows by induction starting from (\ref{eletilde}).
Indeed, using these facts,

    $$c_{5}\tilde{L}_{k+2}^{d-1}L_{k+1}^{2}\varphi_{k}^{\frac{N_{k}}{8}}\leq c_{5}N_{k+1}^{3(d-1)}L_{k+1}^{3d-1}\kappa^{u_{k+1}L_{k+1}} \leq 1,$$
which ends the proof.
   
    \end{proof}
\medskip

\subsection{An adequate choice of fast-growing scales}
\label{adequate}

We will now construct a sequence of scales $\{L_k:k\ge 0\}$ which
satisfy condition $(G)$, and for which Lemma \ref{induction}
will eventually imply Theorem \ref{theorem1}. This is not
the fastest possible growing sequence of scales, but somehow
it captures the best possible choice of $\gamma(L)$.

Let  $\{f_k:k\ge 1\}$ be a sequence of functions from $[0,\infty)$
to $[0,\infty)$ defined recursively as

$$f_0(x):=1,$$

$$f_{1}(x):=v^{x}$$
and for $k\ge 1$,
    $$f_{k+1}(x):=f_{k}\circ f_{1}(x).$$
Let now, for $k\ge 0$,

\begin{equation}
\label{enektilde}
N_{k}:=\frac{\alpha c_1}{u_0}\frac{f_{\left[\frac{k+2}{2}\right]}\left(\left[\frac{k+1}{2}\right]\right)}{f_{\left[\frac{k+1}{2}\right]}\left(\left[\frac{k}{2}\right]\right)}.
\end{equation}
    According to display (\ref{nklk}), we have the following formula
valid for $k\ge 0$,
\begin{equation}
\label{elek}
L_{k+1}=f_{\left[\frac{k+2}{2}\right]}\left(\left[\frac{k+1}{2}\right]\right)\left(\frac{\alpha c_1}{u_0}\right)^{k+1}L_0.
\end{equation}

\medskip

    \begin{lemma}
    \label{escale}
  There exists a constant $c_6(d)$ such that when $L_0\ge c_6$, the scales $\{L_k:k\ge 0\}$
and $\{N_k:k\ge 0\}$ defined by (\ref{elek}) and
(\ref{enektilde}) satisfy condition $(G)$.
    \end{lemma}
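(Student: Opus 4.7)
The plan is to verify both inequalities of $(G)$ by direct computation from the formulas (\ref{enektilde}) for $N_k$ and (\ref{elek}) for $L_k$, together with $u_k=u_0 v^{-k}$. The structural reason they hold is that $L_{k+1}$ grows iterated-exponentially in $k$ through the factor $f_{[(k+2)/2]}([(k+1)/2])$, whereas the quantities that must be dominated either sit one level lower in the tower (after taking $\log$) or are controlled by the geometric factor $(\alpha c_1/(u_0 v))^{k+1}$, which is proportional to $L_0^{k+1}$ and becomes large once $L_0\ge c_6$. The main algebraic tool is the identity $f_{j+1}(x)=v^{f_j(x)}$ for $j\ge 1$, obtained by induction from $f_{j+1}=f_j\circ f_1$, which shows that one application of $\log$ strips a full level off the tower.

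For (\ref{g1}), substituting the definitions gives
\[
u_k N_k=\alpha c_1\cdot v^{-k}\cdot\frac{f_{[(k+2)/2]}([(k+1)/2])}{f_{[(k+1)/2]}([k/2])},
\]
reducing (\ref{g1}) to the inequality $f_{[(k+2)/2]}([(k+1)/2])/f_{[(k+1)/2]}([k/2])\ge v^k$. I would then split on the parity of $k$. For even $k=2m$ with $m\ge 1$ the quotient equals $f_{m+1}(m)/f_m(m)=v^{f_m(m)-f_{m-1}(m)}$, whose exponent is at least $v^m/2\ge 2m$ once $v=8$; for odd $k=2m+1$ with $m\ge 1$ it equals $f_{m+1}(m+1)/f_{m+1}(m)=v^{f_m(m+1)-f_m(m)}$, again with exponent dwarfing $2m+1$. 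The base cases $k=0,1$ produce equality directly, and no constraint on $L_0$ is needed.

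For (\ref{g2}), taking logarithms and substituting $u_0=3(d-1)/(L_0\log(1/\kappa))$ recasts the inequality as
\[
\log c_5+3(d-1)\log N_{k+1}+(3d-1)\log L_{k+1}\le 3(d-1)\Big(\frac{\alpha c_1}{u_0 v}\Big)^{k+1}\!f_{[(k+2)/2]}([(k+1)/2]).
\]
A parity analysis distinguishes two cases. For even $k=2m$, $\log N_{k+1}$ lives one tower level below $L_{k+1}$, so tower-height separation alone suffices. For odd $k=2m+1$, however, $\log N_{k+1}\sim f_{m+1}(m+1)\log v$ sits at the same tower level as the dominant factor of $L_{k+1}$; the argument then relies on the geometric factor $(\alpha c_1/(u_0 v))^{k+1}$, which is a positive power of $L_0$ times a $(d,\kappa)$-dependent constant and hence can be made arbitrarily large by taking $L_0\ge c_6$.

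The main technical obstacle is precisely this odd-$k$ case of (\ref{g2}): the naive tower-height heuristic fails, and one must exploit the geometric margin $(\alpha c_1/(u_0 v))^{k+1}$ to absorb the $\log v$ loss arising from $\log N_{k+1}$. The constant $c_6(d)$ is chosen so that (i) $\alpha c_1/(u_0 v)$ exceeds a suitable constant depending only on $d$, $\kappa$, and $\log v$, ensuring the geometric factor beats all constant-order losses uniformly in $k$, and (ii) the finitely many small-$k$ base cases, in which the iterated-exponential dominance has not yet set in, are checked by direct verification.
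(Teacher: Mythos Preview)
Your outline is correct and follows the same overall architecture as the paper: verify (\ref{g1}) and (\ref{g2}) directly from the explicit formulas for $N_k$, $L_k$, and $u_k$, exploiting elementary growth properties of the tower functions $f_j$ together with a parity split, and finally absorbing the remaining constants into a threshold $c_6$ on $L_0$.

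The differences are in the bookkeeping rather than the strategy. For (\ref{g1}) the paper proves and uses the supermultiplicativity $f_n(a+b)\ge f_n(a)f_n(b)$, whereas you use the commutation identity $f_{j+1}(x)=v^{f_j(x)}$ (equivalently $f_1\circ f_j=f_j\circ f_1$) to rewrite the ratio directly as a power of $v$; both are easy inductions and lead to the same parity-based verification. For (\ref{g2}) the paper splits $\kappa^{u_{k+1}L_{k+1}}$ into three equal factors and bounds $N_{k+1}^{3(d-1)}$, $L_{k+1}^{3d-1}$, and $c_5$ against one factor each, reducing to showing that certain ratios of $\log_v f$-values are bounded uniformly in $k$; your single-logarithm comparison with tower-height accounting is an equivalent reorganisation. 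Your observation that the odd-$k$ case of (\ref{g2}) is the tight one, where $\log N_{k+1}$ sits at the same tower level as the leading factor of $L_{k+1}$ and the margin must come from the geometric factor $(\alpha c_1/(u_0 v))^{k+1}\asymp (c\,L_0)^{k+1}$, matches exactly what happens in the paper (there the corresponding ratio equals $1$). One small point to make explicit when you write it up: the lower-order terms on your left-hand side, such as $(3d-1)(k+1)\log(\alpha c_1/u_0)\asymp (k+1)\log L_0$, also depend on $L_0$, so you should check they are dominated by the right-hand side uniformly in $k$; this is straightforward since the right-hand side carries $(c\,L_0)^{k+1}$.
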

    \begin{proof}
    We begin proving (\ref{g1}) of condition $(G)$.
  Note that (\ref{g1}) is equivalent to

\begin{equation}
    \label{cg1}
    \frac{f_{\left[\frac{k+2}{2}\right]}\left(\left[\frac{k+1}{2}\right]\right)}{f_{\left[\frac{k+1}{2}\right]}\left(\left[\frac{k}{2}\right]\right)v^{k}}\geq 1\quad {\rm for}\ k\ge 0,
    \end{equation}
    which is obviously true for $k=0,\,1$ and $2$. Therefore it is enough to prove
inequality (\ref{cg1}) for $k\geq3$. For this purpose, we
will first show that for all positive integers $n$, and
$a,\,b\ \in [1,\infty)$,  we have that

\begin{equation}
\label{claim1}
f_n\left(a+b\right)\geq f_n (a)f_n(b).
\end{equation}
    To prove (\ref{claim1}), suppose that

    $$
A:=\{n\in \mathbb N:f_n\left(a+b\right)< f_n (a)f_n(b)\,\, \mbox{for some}\,\
a,b \geq 1\}\neq \varnothing.
$$
  Let $m$ be the smallest element of $A$ and remark that $m$
 is greater than $1$. Also, note that
    $$f_m\left(a+b\right)< f_m (a)f_m(b)$$
    for some $a,b \geq 1$. However, note
that for $a,b\ge 1$ one has that
    $$v^{a+b}\geq v^{a}+v^{b}.$$
 Furthermore,  for each $k\ge 0$, the function
 $f_k(\cdot)$ is increasing. Therefore,

\begin{eqnarray*}
&f_{m-1}(v^{a})f_{m-1}(v^{b})=f_{m}(a)f_m(b)\\
&>f_m(a+b)=f_{m-1}(v^{a+b})\geq f_{m-1}(v^{a}+v^{b}).
\end{eqnarray*}
This contradictions  the minimality of $m$ and hence $A=\varnothing$
which proves (\ref{claim1}).

    Back to (\ref{cg1}), note that

    \begin{eqnarray*}
&    \frac{f_{\left[\frac{k+2}{2}\right]}\left(\left[\frac{k+1}{2}\right]\right)}{f_{\left[\frac{k+1}{2}\right]}\left(\left[\frac{k}{2}\right]\right)v^{k}}\geq
    \frac{f_{\left[\frac{k+2}{2}\right]}\left(\left[\frac{k+1}{2}\right]-1\right)}{f_{\left[\frac{k+1}{2}\right]}\left(\left[\frac{k}{2}\right]\right)}
\frac{f_{\left[\frac{k+2}{2}\right]}\left(1\right)}{v^{k}}\geq
   \frac{f_{\left[\frac{k+2}{2}\right]}\left( 1 \right)}{v^{k}} \geq 1,
    \end{eqnarray*}
    where the first inequality was gotten using (\ref{claim1}),
the second one is a consequence of the inequality

    $$\frac{f_{\left[\frac{k+2}{2}\right]}\left(\left[\frac{k+1}{2}\right]-1\right)}{
f_{\left[\frac{k+1}{2}\right]}\left(\left[\frac{k}{2}\right]\right)}\geq 1,$$
valid for $k\ge 3$, and    which can be proved in a straightforward fashion if we divide the argument according to whether $k$ is even or odd,
    and the last inequality comes from the fact that

\begin{equation}
\label{desi}
f_{[\frac{k+2}{2}]-1}(1)-k \geq 0\qquad{\rm for}\ k\ge 3.
\end{equation}
Now, (\ref{desi})
can be proven noting that it is satisfied for $k=3$,
 the left-hand side of (\ref{desi}) achieves
its minimum value for $k=4$, and is increasing for every $k\ge 3$, from
$2k$ to $2k+1$, and from $2k$ to $2k+2$.      This completes the proof of (\ref{cg1}).

    We now prove  inequality (\ref{g2}) of condition $(G)$.
 We need to show that there exists a constant $c(d,\kappa)$,
 such that whenever $L_{0}\geq c(d,\kappa)$, for all $k\ge 0$
one has that

\begin{equation}
\label{ineqprin}
c_{5}N_{k+1}^{3(d-1)}L_{k+1}^{3d-1}\kappa^{u_{k+1}L_{k+1}} \leq 1.
\end{equation}
    We will first show that there exists $c_7(d,\kappa)=c_7(d)>0$,
 such that whenever $L_{0}\geq c_7$, one has that for $k\ge 0$,

\begin{equation}
\label{enek11}
N_{k+1}^{3(d-1)}\kappa^{\frac{u_{k+1}L_{k+1}}{3}}\leq 1.
\end{equation}
    Now (\ref{enek11}) is equivalent to

\begin{eqnarray}
\nonumber
&3(d-1)\log_v\left(\frac{\alpha c_1}{u_0}\,\frac{f_{\left[\frac{k+3}{2}\right]}\left(\left[\frac{k+2}{2}\right]\right)}{f_{\left[\frac{k+2}{2}\right]}\left(\left[\frac{k+1}{2}\right]\right)} \right)\\
\nonumber
&-
 \frac{L_0 u_0 f_{\left[\frac{k+2}{2}\right]}\left(\left[\frac{k+1}{2}\right]\right)\left(\frac{\alpha c_1} {v u_0}\right)^{k+1}
\log_{v}\left( \frac{1}{\kappa} \right)
}{3}
\leq 0.
\end{eqnarray}
Therefore, (\ref{enek11})  is equivalent to the
bound for $k\ge 0$,

\begin{equation}
\label{enek13}
L_0\geq \frac{\frac{9(d-1)}{u_0}\log_{v}\left(\frac{\alpha c_1}{u_0}\frac{f_{\left[\frac{k+3}{2}\right]}\left(\left[\frac{k+2}{2}\right]\right)}{f_{\left[\frac{k+2}{2}\right]}\left(\left[\frac{k+1}{2}\right]\right)}  \right)}{
 f_{\left[\frac{k+2}{2}\right]}\left(\left[\frac{k+1}{2}\right]\right)\left(\frac{\alpha c_1} {v u_0}\right)^{k+1}
\log_{v}\left( \frac{1}{\kappa} \right) }.
\end{equation}
Let us focus in right-hand side of inequality (\ref{enek13}) .  Note
that it can be split as

\begin{equation}
\label{decompostion}
\frac{
\frac{9(d-1)}{u_0}\log_v\left( \frac{\alpha c_1 }{u_0}\right)}{
f_{\left[\frac{k+2}{2}\right]}\left(\left[\frac{k+1}{2}\right]\right)\left(\frac{\alpha c_1} {v u_0}\right)^{k+1}
\log_{v}\left( \frac{1}{\kappa} \right)
}\,+\,\frac{\frac{9(d-1)}{u_0}\log_v\left( \frac{f_{\left[\frac{k+3}{2}\right]}\left(\left[\frac{k+2}{2}\right]\right)}{f_{\left[\frac{k+2}{2}\right]}\left(\left[\frac{k+1}{2}\right]\right)}\right)}{
 f_{\left[\frac{k+2}{2}\right]}\left(\left[\frac{k+1}{2}\right]\right)\left(\frac{\alpha c_1} {v u_0}\right)^{k+1}
\log_{v}\left( \frac{1}{\kappa} \right)
}.
\end{equation}
Let us now try to find an upper bound for this expression
independent on $u_0$ (or
equivalently, on $L_0$). By the definition of
$u_0$ (c.f. (\ref{ucero})) note that for $k\geq0$ and
$L_0 \geq \frac{3(d-1)}{\log\frac{1}{\kappa}}$ one has that,

$$
\frac{1}{u_0}\frac{1}{\left(\frac{\alpha c_1}{ v u_0}\right)^{k+1}}
 =
\frac{1}{\left(\frac{\alpha c_1}{ v u_0}\right)^{k}}\frac{1}{\left(\frac{\alpha c_1}{v}\right)}\leq
\frac{1}{\left(\frac{\alpha c_1}{ v }\right)^{k+1}}.
$$
Substituting this into (\ref{decompostion}) we see that
it is bounded from above by
\begin{equation}
\label{erase}
\frac{
9(d-1) \log_v\left( \frac{\alpha c_1 }{u_0}\right)}{
f_{\left[\frac{k+2}{2}\right]}\left(\left[\frac{k+1}{2}\right]\right)\left(\frac{\alpha c_1} {v }\right)^{k+1}
\log_{v}\left( \frac{1}{\kappa} \right)
}\,+\,\frac{9(d-1) \log_v\left( \frac{f_{\left[\frac{k+3}{2}\right]}\left(\left[\frac{k+2}{2}\right]\right)}{f_{\left[\frac{k+2}{2}\right]}\left(\left[\frac{k+1}{2}\right]\right)}\right)}{
 f_{\left[\frac{k+2}{2}\right]}\left(\left[\frac{k+1}{2}\right]\right)\left(\frac{\alpha c_1} {v }\right)^{k}
\log_{v}\left( \frac{1}{\kappa} \right)
}.
\end{equation}
Note that only the left-most term of (\ref{erase})
depends on $L_0$. Choose a constant $c_8(d,\kappa)=c_8(d)>1$, such that if
$L_0\geq c_8$

$$
\log_v\left(\frac{\alpha c_1}{u_0}\right)\le L_0\frac{\log_v\left(\frac{1}{\kappa}\right)}{d-1}.
$$
Then, when $L_0\ge c_8$, the left-most term of (\ref{erase})
can be bounded by

\begin{equation}
\label{onethird}
L_0\frac{9v}{\alpha c_1}
\le L_0 \frac{72}{240}\le \frac{L_0}{3}.
\end{equation}
Thus, whenever $L_0\ge c_8$, from
 (\ref{decompostion})
(\ref{erase}) and (\ref{onethird}), we see that
(\ref{enek13}) is satisfied if
\begin{equation}
\label{ineforlo}
L_0 \geq \frac{3}{2}\frac{9(d-1)\log_v\left( \frac{f_{\left[\frac{k+3}{2}\right]}\left(\left[\frac{k+2}{2}\right]\right)}{f_{\left[\frac{k+2}{2}\right]}\left(\left[\frac{k+1}{2}\right]\right)}\right)}{
 f_{\left[\frac{k+2}{2}\right]}\left(\left[\frac{k+1}{2}\right]\right)\left(\frac{\alpha c_1} {v }\right)^{k+1}
\log_{v}\left( \frac{1}{\kappa} \right)
}.
\end{equation}
Therefore, in order to prove (\ref{enek11}) it is enough
to show that the right hand side of inequality (\ref{ineforlo})  is bounded.
To do this, it is enough to prove that
the expression

$$
   \frac{\log_v\left(\frac{f_{\left[\frac{k+3}{2}\right]}\left(\left[\frac{k+2}{2}\right]\right)}{f_{\left[\frac{k+2}{2}\right]}\left(\left[\frac{k+1}{2}\right]\right)} \right)}{ f_{\left[\frac{k+2}{2}\right]}\left(\left[\frac{k+1}{2}\right]\right)}
$$
is bounded. Now,

    \begin{equation}
    \label{eq}
   \frac{\log_v\left(\frac{f_{\left[\frac{k+3}{2}\right]}\left(\left[\frac{k+2}{2}\right]\right)}{f_{\left[\frac{k+2}{2}\right]}\left(\left[\frac{k+1}{2}\right]\right)} \right)}{ f_{\left[\frac{k+2}{2}\right]}\left(\left[\frac{k+1}{2}\right]\right)}
\leq \frac{ \log_v\left(f_{\left[\frac{k+3}{2}\right]}\left(\left[\frac{k+2}{2}\right]\right) \right)}{ f_{\left[\frac{k+2}{2}\right]}\left(\left[\frac{k+1}{2}\right]\right)}.\\
    \end{equation}
Let us now remark that if $k$ is even, then
 $\left[\frac{k+3}{2}\right] =\left[\frac{k+2}{2}\right]$
and
 $\left[\frac{k+1}{2}\right] =\left[\frac{k+2}{2}\right]-1$.
 Therefore, in this case, the right-hand side  of inequality (\ref{eq}) is smaller than

$$
\frac{f_{\left[\frac{k+2}{2} \right]-1}\left(\left[\frac{k+2}{2}\right]\right)}{f_{\left[\frac{k+2}{2} \right]}\left(\left[\frac{k+2}{2}\right]-1\right)}
=
\frac{f_{\left[\frac{k+2}{2} \right]-1}\left(\left[\frac{k+2}{2}\right]\right)}{f_{\left[\frac{k+2}{2} \right]-1}\left(v^{\left[\frac{k+2}{2}\right]-1}\right)}.
$$
But, since for $k$ fixed, the function $f_k(\cdot)$ is  increasing,
and since for $k\ge 0$ we have that

    $$v^{\left[\frac{k+2}{2}\right]-1}\ge \left[\frac{k+2}{2}\right],$$
we see that the right-hand side of inequality (\ref{eq}) is bounded.
Hence, for $k$ even the right-most term of (\ref{enek13}) is bounded by a
 constant $c_{9}(d,\kappa)=c_9(d)>0$.

Suppose now that  $k$ is odd. Then $\left[\frac{k+3}{2}\right] =\left[\frac{k+2}{2}\right]+1$ and
 $\left[\frac{k+1}{2}\right] =\left[\frac{k+2}{2}\right]$.
Therefore, in this case,  the right-hand side of inequality (\ref{eq})
is equal to

$$\frac{f_{\left[\frac{k+2}{2} \right]}\left(\left[\frac{k+2}{2}\right]\right)}{f_{\left[\frac{k+2}{2} \right]}\left(\left[\frac{k+2}{2}\right]\right)}=1,$$
so that there is constant $c_{10}(d,\kappa)=c_{10}(d)>0$ which is an upper bound for
the right-hand side of inequality (\ref{enek13}).
We can hence conclude, taking  $c_7(d)=\max\{c_{9}(d),c_{10}(d)\}$,
that when $L_0 \geq c_7(d)$, then (\ref{enek11}) holds.

    As a second step to prove (\ref{ineqprin}),
we will show that it is possible to find a positive constant
$c_{11}(d,\kappa)=c_{11}(d)$ such that when $L_0\geq c_{11}$ one has that for all $k\ge 0$,
    \begin{equation}
    \label{des2a3}
    L_{k+1}^{3d-1}\,\kappa^{\frac{u_{k+1}L_{k+1}}{3}}\,\leq \, 1.
    \end{equation}
    Inserting the definition (\ref{elek}) that defines $L_k$ into
 this inequality, we see that it is enough to prove that

\begin{equation}
\label{in12}
 (3d-1)\log_v\left( L_{k+1}\right) - \frac{ \log_v \left( \frac{1}{ \kappa} \right) u_0 \left( \frac{\alpha c_1}{u_0 v}\right)^{k+1} f_{\left[\frac{k+2}{2}\right]}\left(\left[\frac{k+1}{2}\right]\right)L_0}{3} \leq 0.
\end{equation}
   Now, to prove (\ref{in12}), we need to show that for all $k\ge 0$,
\begin{equation}
\label{in13}
L_0\geq \frac{\log_v\left(L_{k+1}\right)3(3d-1)}{\log_v\left( \frac{1}{\kappa}\right)u_0 \left(\frac{\alpha c_1}{u_0 v}\right)^{k+1}f_{\left[\frac{k+2}{2}\right]}\left(\left[\frac{k+1}{2}\right]\right)}.
\end{equation}
But the right-hand side of inequality (\ref{in13})
can be written as

\begin{equation}
\nonumber
\frac{3(3d-1)\log_v\left[L_0\left( \frac{\alpha c_1}{u_0 }\right)^{k+1}\right]}{\log_v\left( \frac{1}{\kappa}\right)u_0 \left(\frac{\alpha c_1}{u_0 v}\right)^{k+1}
f_{\left[\frac{k+2}{2}\right]}\left(\left[\frac{k+1}{2}\right]\right)}
+
\frac{3(3d-1)\log_v\left( f_{\left[\frac{k+2}{2}\right]}\left(\left[\frac{k+1}{2}\right]\right)\right)}{f_{\left[\frac{k+2}{2}\right]}\left(\left[\frac{k+1}{2}\right]\right)
}.
\end{equation}
We need to establish a control with respect to $L_0$ in this expression. Only
the first term depends on $L_0$ so we concentrate on the second term.
To this end this term is decreasing with $k$. Therefore,
it is smaller than
\begin{equation}
\nonumber
\frac{3(3d-1)\log_v\left[L_0\left( \frac{\alpha c_1}{u_0 }\right)\right]}{\log_v\left( \frac{1}{\kappa}\right) \left(\frac{\alpha c_1}{ v}\right)}=\frac{3(3d-1)\log_v \left( \frac{L_0^2\alpha c_1\log(\frac{1}{\kappa})}{3(d-1) }\right)}{\log_v\left( \frac{1}{\kappa}\right) \left(\frac{\alpha c_1}{ v}\right)}
\end{equation}
From this last expression, it is clear that we can choose a constant $c_{12}(d, \kappa)=c_{12}(d)>0$ such that whenever $L_0\geq c_{12}(d)$ one
has that

\begin{equation}
\frac{3(3d-1)\log_v\left[L_0\left( \frac{\alpha c_1}{u_0 }\right)^{k+1}\right]}{\log_v\left( \frac{1}{\kappa}\right)u_0 \left(\frac{\alpha c_1}{u_0 v}\right)^{k+1}
f_{\left[\frac{k+2}{2}\right]}\left(\left[\frac{k+1}{2}\right]\right)}\leq \frac{L_0}{3}.
\end{equation}
Therefore, if $L_0\ge c_{12}(d)$ and if
\begin{equation}
\label{ss}
L_0\geq\frac{3}{2}\frac{3(3d-1)\log_v\left( f_{\left[\frac{k+2}{2}\right]}\left(\left[\frac{k+1}{2}\right]\right)\right)}{f_{\left[\frac{k+2}{2}\right]}\left(\left[\frac{k+1}{2}\right]\right)
},
\end{equation}
 we would have (\ref{des2a3}), whenever we could prove that the right hand side of (\ref{ss}) is bounded independently of $k\geq0$.
This can be proven in analogy to the previous computations
made to show that the right-hand side of (\ref{ineforlo})
is bounded. We have thus established the existence of
a constant  $c_{11}(d)$ such that (\ref{des2a3}) is satisfied
whenever $L_0\ge c_{11}(d)$.

    On the other hand it is obvious that there  is a constant  $c_{13}(d)$, such that when $L_0\geq c_{13}(d)$, for $k\ge 0$,

    $$c_5\kappa^{\frac{u_{k+1}L_{k+1}}{3}} \leq 1.$$
   Finally, in order for inequality (\ref{g2}) of condition $(G)$
to be fulfilled, it is enough to take $c_6(d):=\max\{c_7(d),\,c_{11}(d),\,c_{13}(d)\}$.

    \end{proof}

\subsection{The effective criterion implies Theorem \ref{theorem1}}
\label{effective-theorem}
We continue now showing how Lemma \ref{induction} with the appropriate choice
of scales, enables us to use the effective criterion to prove
the decay of Theorem \ref{theorem1}. Let us define
for $x\in\mathbb Z^d$,

$$
|x|_\perp:=\max\{|x\cdot R(e_i)|:2\le i\le d\}.
$$
Also, define for each $x\in\mathbb Z^d$, the canonical translation
on the environments $t_x:\Omega\to\Omega$ as

$$
t_x(\omega)(y):=\omega(x+y)\quad{\rm for}\ y\in\mathbb Z^d.
$$
For the statement of the following proposition and its proof, we will use the shorthand
notation for each $n$,

$$
\log_8^{(n)}(L):=\overbrace{\log_8\circ\cdots\circ\log_8}^n (L).
$$
\medskip

    \begin{proposition}
\label{final-prop}
    There exist $c_{15}(d)>1$, $c_{14}(d)\geq 3\sqrt{d}$ such that whenever
  $L_{0}\geq c_{14}$,
$3\sqrt{d}\leq \tilde{L}_{0}\leq L_{0}^{3}$, and for the box specification $\mathcal{B}_{0}=(R,L_0 - 1, L_0 +1, \tilde{L}_0)$, the condition
    \begin{equation}
    \label{ce1}
    c_{15}\left(log\left(\frac{1}{\kappa}\right)\right)^{3(d-1)}\tilde{L}_{0}^{d-1}L_{0}^{3d-2} \inf_{a \in(0,1]} \mathbb{E}[\rho_{0}^{a}]<1,
    \end{equation}
is satisfied (recall the definition of $\rho_0$
in (\ref{citerho0})), then there exist a constant $c > 0$ and a function $n(L): [0,\infty) \rightarrow \mathbb{N}$, with $n(L) \rightarrow \infty$ as $L\rightarrow\infty$, such that
    \begin{equation}
    \label{result}
     \limsup_{L \rightarrow \infty} \,\, L^{-1}\,\exp \{c  \log_8^{n(L)}L\}\, \log P_0(T_{L}^{l} \leq \tilde{T}_{-L}^{l})<0.
    \end{equation}

    \end{proposition}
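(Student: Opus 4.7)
The plan is to combine the quenched-moment estimates of Lemma~\ref{induction} at the fast-growing scales from Lemma~\ref{escale} with a Sznitman-style slab-exit decomposition, and then invert the tower growth of $L_k$ to extract the function $n(L)$ appearing in the effective parameter. As a first step I would verify that (\ref{ce1}) delivers the hypothesis (\ref{phi0}) of Lemma~\ref{induction}: from (\ref{enektilde}) and $f_0 \equiv 1$, $f_1(0) = 1$ one has $N_0 = \alpha c_1/u_0$, so $\tilde L_1 = N_0^3 \tilde L_0 = (\alpha c_1/u_0)^3 \tilde L_0$, while $u_0 = 3(d-1)/(L_0 \log(1/\kappa))$ gives $\kappa^{u_0 L_0} = e^{-3(d-1)}$. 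Selecting $a_0 \in (0,1]$ close to attaining the infimum in (\ref{ce1}) and unpacking the algebra, (\ref{ce1}) rewrites as (\ref{phi0}) provided $c_{15}(d)$ is large enough. Lemma~\ref{escale} then supplies condition $(G)$ for $L_0 \geq c_6(d)$, and Lemma~\ref{induction} yields
\begin{equation*}
\mathbb{E}[\rho_k^{a_k}] \leq \frac{\kappa^{u_k L_k}}{c_4\, \tilde L_{k+1}^{d-1}\, L_k} \qquad (k \geq 0).
\end{equation*}

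Secondly, I would convert these moment bounds into annealed slab-exit estimates. Since $q_k \leq q_k^{a_k} \leq \rho_k^{a_k}$, the inequality above controls $\mathbb{E}[q_k]$, and a standard shift-invariance argument (exploiting the large transverse dimension $\tilde L_k$ from (\ref{elektilde})) absorbs the polynomial overhead and produces
\begin{equation*}
P_0(\tilde T_{-L_k}^{l} < T_{L_k}^{l}) \leq C(d)\, \kappa^{u_k L_k/2}.
\end{equation*}
For arbitrary $L$, letting $k(L)$ be the unique index with $L_{k(L)} \leq L < L_{k(L)+1}$, a strong-Markov/renewal decomposition together with shift invariance of $\mathbb{P}$ extends this to a bound of comparable form at scale $L$: each backward excursion of depth $\geq L_{k(L)}$ inside the slab $\{|x \cdot l| < L\}$ is controlled by the scale-$k(L)$ estimate, so that the bound for $P_0(\tilde T_{-L}^l < T_L^l)$ inherits the essential decay $\kappa^{O(u_{k(L)} L_{k(L)})}$ up to a manageable interpolation factor.

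Thirdly, I would read off $n(L)$ by inverting (\ref{elek}). Since $f_m(x)$ is a tower of $m$ copies of $8$ with $x$ at the top, one has $\log_8^{(m)} f_m(x) = x$, so from (\ref{elek}) for $L \in [L_k, L_{k+1})$ the choice $n(L) := [(k(L)+1)/2]$ yields $\log_8^{n(L)} L \asymp k(L)/2$, and in particular both $n(L) \to \infty$ and $\log_8^{n(L)} L \to \infty$ as $L \to \infty$. Combining this with $u_k = u_0\, v^{-k}$, the slab-exit bound of the previous step reduces (\ref{result}) to the quantitative inequality
\begin{equation*}
c\, \log_8^{n(L)} L \geq k(L) \log v + O(1),
\end{equation*}
which holds for all large $L$ provided $c$ is taken to be a sufficiently large multiple of $\log 8$.

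The main obstacle lies in the second step: the scales grow so rapidly that the gap $N_k = L_{k+1}/L_k$ is itself a tower in $k$, and for $L$ close to $L_{k+1}$ a naive application of the scale-$k$ bound does not suffice. Overcoming this requires the \emph{strong} decay $u_k L_k \sim u_0 L_k$ built into (\ref{ucero}), (\ref{nklk}) and (\ref{elek})---so that the scale-$k$ bound is of the form $\kappa^{O(L_k)}$ with a gain large enough to absorb the interpolation loss---together with careful use of the tower identity $f_{m+1}(m) = f_m(v^m)$, which is the quantitative reason behind the specific scale choice of Subsection~\ref{adequate}.
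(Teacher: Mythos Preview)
Your first and third steps match the paper's argument, and your identification of the obstacle is exactly right. The gap is in your proposed resolution of that obstacle.

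You write that the scale-$k$ bound is ``of the form $\kappa^{O(L_k)}$'' because ``$u_k L_k \sim u_0 L_k$''. This is false: by definition $u_k = u_0 v^{-k}$, so the exponent in (\ref{phik}) is $u_0 L_k / v^k$, not a fixed multiple of $L_k$. Consequently, when $L$ sits near the top of the interval $[L_k, L_{k+1})$, the ratio $L/L_k$ can be as large as $N_k$, which by (\ref{enektilde}) is itself of tower size in $k$. A strong-Markov decomposition of the slab $\{|x\cdot l|<L\}$ into $\sim L/L_k$ sub-slabs (plus the lateral count from $\tilde L_k$) produces a polynomial prefactor of order $(L/L_k)^d$, and condition $(G)$ only guarantees $N_k^{3(d-1)} \kappa^{u_k L_k} \lesssim 1$ (shift the index in (\ref{g2})), which is not enough for $(L/L_k)^d \kappa^{u_k L_k/2}$ to stay bounded. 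So the ``manageable interpolation factor'' is not manageable on the full interval $[L_k, L_{k+1})$.

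The paper deals with this by splitting into two cases. If $L \le \frac{2\alpha c_1}{u_0} v^k L_k$, the prefactor $(L/L_k)^d$ is only of order $v^{kd}$ and is absorbed by $\kappa^{u_k L_k/4}$, yielding a bound $\exp(-c L/v^{2k})$. If instead $L > \frac{2\alpha c_1}{u_0} v^k L_k$, one \emph{cannot} interpolate directly; the paper introduces an intermediate scale $S_1^k := m_k L_k$ with $m_k := \lfloor L/L_k \rfloor$, sets up the corresponding box specification, and applies Proposition~\ref{sznitman} \emph{once more} (with $\mathcal B_k$ in the role of $\mathcal B_0$ and the new box in the role of $\mathcal B_1$) to obtain $(\tilde S_2^k)^{d-1} S_1^k\, \mathbb E[\hat\rho_k^{a_{k+1}}] \le \kappa^{u_{k+1} S_1^k}$. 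Since $S_1^k \in (L - L_k, L]$ and here $L_k \le \frac{u_0}{2\alpha c_1} v^{-k} L$, this gives a bound $\exp(-c L/v^{k+1})$ directly. The tower identity you mention plays no role in this interpolation; what matters is that $m_k \ge \frac{\alpha c_1}{u_0} v^k$, which is exactly the threshold making the one-step renormalization inequality (\ref{one}) go through at the intermediate scale.
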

    \begin{proof}
Let us choose a sequence of scales $\{L_k:k\ge 0\}$
and $\{\tilde L_k:k\ge 0\}$ according to displays
(\ref{elek}) and (\ref{elektilde}). 
With this choice of scales, as in the proof of Proposition 2.3 of Sznitman \cite{Sz02},
one can see that there are  constants $c_{15}(d)$ and 
 $c_{14}\geq \max\{c_6,\,c_2\}$ such that if $L_0\geq c_{14}$
then
condition (\ref{ce1}) implies  condition
(\ref{phi0}) of Lemma \ref{induction} with $u_0$ 
chosen according to (\ref{ucero}).
By Lemma (\ref{escale}), the chosen scales $\{L_k:k\ge 0\}$
and $\{\tilde L_k:k\ge 0\}$ satisfy condition $(G)$.
Therefore, since (\ref{phi0})  of Lemma (\ref{induction})
is satisfied , we know
that for all $k\ge 0$, inequality (\ref{phik}) is satisfied.
The strategy to prove (\ref{result}) will be
similar to that employed in \cite{Sz02} to prove Proposition 2.3:
we will first choose an appropriate
$k$ so that $L_k$ approximates a fixed scale $L$
tending to $\infty$.
Nevertheless, since here we are working with scales
which are much larger than those used in \cite{Sz02}, we will have to
be much more careful with this argument.

Let $L\ge L_0$. Then, there exists
a unique integer $k=k(L)$ such that
    $$L_{k}\leq L < L_{k+1}.$$
 Note that to prove (\ref{result})
it is enough to show that there
exists a positive constant $c_{16}$ such that
for all $L\ge L_0$ one has that

\begin{equation}
\label{result0}
P_{0}(\tilde{T}_{-L}^{l}<T_{L}^{l}) \leq \frac{1}{c_{16}}\exp\left\{-c_{16}L
\exp\left\{-
\frac{1}{c_{16}}
\log_8^{\left(\left[\frac{k+1}{2}\right]\right)}(L) \right\}\right\}.
\end{equation}
In effect, since clearly $k\to\infty$ as $L\to\infty$,
choosing $n(L)=\left[\frac{k+1}{2}\right]$ we have (\ref{result}).

We will divide the proof of (\ref{result0})  into two cases.

\medskip

\noindent    \emph{Case 1}. Assume that

\begin{equation}
\label{case1}
L \leq \frac{2 \alpha c_{1}}{u_{0}}v^{k}L_{k}.
\end{equation}
 Let

    $$B:=\left\{x \in \R:|x|_\perp \leq \left[\frac{L}{L_{k}}\right]\tilde{L}_{k}, \, x \cdot l\in(-L,L) \right\}.$$
From the inequality
 $\mathbb{E}[q_{k}]\leq\mathbb{ E}[\rho_{k}^{a_{k}}]$, Lemma \ref{induction}
 and Chebyshev inequality, we see that
if

    $$\mathcal{H}:=
\{\omega\in\Omega:\exists x \in B\ {\rm such}\ {\rm that}\ \,\, q_{k}\circ t_{x}(\omega)\geq \kappa^{\frac{1}{2}u_{k}L_{k}}\},$$
then

    $$\mathbb{P}(\mathcal{H}) \leq \kappa^{\frac{1}{2}u_{k}L_{k}}\frac{|B|}{\tilde{L}_{k+1}^{d-1}L_{k}}.$$
Note that
    on $\mathcal{H}^{c}$, by the strong Markov property  one has that

    $$P_{0,\omega}(T_{L}^{l} \leq \tilde{T}_{-L}^{l})\geq (1-\kappa^{\frac{1}{2}u_{k}L_{k}})^{\left[\frac{L}{L_{k}}\right]+1}.$$
  Therefore, since for $x\in[0,1]$ and $n$ natural
one has that $(1-x)^n\le n(1-x)$, for $L$ large enough

\begin{eqnarray}
\nonumber
& P_{0}(\tilde{T}_{-L}^{l}<T_{L}^{l})\leq \left(\frac{|B|}{\tilde{L}_{k+1}^{d-1}L_{k}}+\frac{L}{L_{k}}+1 \right)\kappa^{\frac{1}{2}u_{k}L_{k}}\\
\nonumber
&\le 3 \times 2^d 
\left(\frac{L}{L_k}\right)^d\kappa^{\frac{1}{2}u_{k}L_{k}}\\
\label{leading}
&\le 3 \times 2^d 
 \left(\frac{2\alpha c_1v^{k}}{u_0}\right)^d\kappa^{\frac{1}{4}u_{k}L_{k}}\leq 1,
\end{eqnarray}
where in the third inequality we have used 
our assumption on $L$ (\ref{case1}).
Hence,  we can check that there
is a constant $c_{17}$, such that
for $k\ge 0$,

    \begin{equation}
    \label{equ1}
    P_{0}(\tilde{T}_{-L}^{l}<T_{L}^{l}) \leq
\frac{1}{c_{17}} \exp \left\{- c_{17}\frac{L_k}{v^{k}} \right\}.
    \end{equation}
 Now, again by our assumption  (\ref{case1}), observe that
there is a constant $c_{18}$ such that

    \begin{equation}
    \label{equ2}
    \frac{L_k}{v^{k}}> c_{18}\frac{L}{v^{2k}}.
    \end{equation}
On the other hand, note that
when $L_0\ge\sqrt{\frac{3(d-1)}{\alpha c_1 \log\frac{1}{\kappa}}}$, we
have by the choice scales given in (\ref{elek}), that for $k\ge 1$

 \begin{equation}
\label{fek}
f_{\left[\frac{k+1}{2}\right]}\left(\left[\frac{k}{2}\right]\right)\leq L_k \leq L.
\end{equation}
Repeatedly taking logarithms in (\ref{fek}), we conclude that
for $k\ge 1$
    \begin{equation}
    \label{equ3}
    \frac{k}{4}\leq\left[\frac{k}{2}\right]\leq \log_8^{
\left(\left[\frac{k+1}{2}\right]\right)}(L).
    \end{equation}
  Then, substituting the inequalities
 (\ref{equ2}) and (\ref{equ3}) into (\ref{equ1}),
we see that there exists a positive constants $c_{16}$ 
such that for $L\ge L_0$

    $$P_{0}(\tilde{T}_{-L}^{l}<T_{L}^{l}) \leq \frac{1}{c_{16}}\exp
\left\{-c_{16}L\exp\left\{-\frac{1}{c_{16}}
\log_8^{\left(\left[\frac{k+1}{2}\right]\right)}(L) \right\}\right\}.$$
   Now, (\ref{result}) follows taking $n(L)=\left[\frac{k+1}{2}\right]$.

    \medskip
\noindent    \emph{Case 2}. Let us now assume that

$$
L > \frac{2 \alpha c_{1}}{u_{0}}v^{k}L_{k}.
$$
Let
    $m_{k}$ be the unique integer such that
    $$m_{k}L_{k}\leq L  < (m_{k}+1)L_{k}.$$
 By the definition of $m_k$ we have the inequality
    \begin{equation}
    \label{mk}
    m_{k}\geq \frac{\alpha c_{1}}{u_{0}} v^k.
    \end{equation}
We will now follow an approach similar to the one
employed for {\it Case 1}, but using a sequence of
scales which approximate $L$ with a higher precision than
the $\{L_k\}$ sequence. 
    Let us define

    \begin{eqnarray}
    \label{lkformk}
 S_1^k &:=&m_{k}L_{k},\\
 \nonumber
 \widetilde{S}_1^k&:=&m_k^3 \widetilde{L}_k,\\
 \nonumber
 S_2^k&:=&m_k^2L_k,\\
 \nonumber
 \widetilde{S}_2^k&:=&m_k^6 \widetilde{L}_{k},
    \end{eqnarray}
  along with the
 box-specification $\mathcal{\widetilde{B}}:=(R,S_1^k-1,S_1^k +1,\widetilde{S}_1^k )$ and the random variable $\widehat{\rho}_{k}$ attached to this
    box-specification.
In analogy with the proof of Lemma \ref{induction}, we will prove that
    \begin{equation}
    \label{desfinal}
    (\widetilde{S}_2^k)^{d-1} S_1^k \mathbb{E}[\widehat{\rho}_{k}^{a_{k+1}}]\leq \kappa^{u_{k+1}S_1^k}.
    \end{equation}
    For the time being, assume that this inequality is true.
Let

    $$\widehat{B}=\left\{x \in \R:|x|_{\perp} \leq \left[\frac{L}{S_1^k} \right]
\tilde{S}_1^{k}, \, x \cdot l\in(-L,L) \right\}.$$
    In analogy with the development
of {\it Case 1},  using (\ref{desfinal}) we can arrive to the
following inequality analogous to (\ref{leading}) 
    $$P_{0}[\tilde{T}_{-L}^{l}<T_{L}^{l}]\leq \left(\frac{|\widehat{B}|}{(\widetilde{S}_2^k)^{d-1} S_1^k }+\frac{L}{S_1^k}+1 \right)\kappa^{\frac{1}{2}u_{k+1} S_1^k}.$$
  From here we conclude that there is a constant $c_{19}$ such that
for $k\ge 0$
    \begin{equation}
    \label{lequ}
    P_{0}(\tilde{T}_{-L}^{l}<T_{L}^{l})\leq \frac{1}{c_{19}}\exp\left\{-\frac{c_{19}S_1^k}{v^{k}}\right\}
    \end{equation}
    Now, the  computation 
    $S_1^k=m_k L_k=(m_k+1)L_k-L_k\geq L - \frac{u_{0}}{2 \alpha c_{1}}v^{-k}L,$
    replaced at (\ref{lequ}), gives us
    $$P_{0}(\tilde{T}_{-L}^{l}<T_{L}^{l})\leq
\frac{1}{c_{19}} \exp\left\{-\frac{c_{19}L\left(1-\frac{u_{0}}{2 \alpha c_{1}}v^{-k}\right)}{v^{k}}\right\}$$
    So that, there exists $c_{20}$ such that
    $$P_{0}(\tilde{T}_{-L}^{l}<T_{L}^{l})\leq
\frac{1}{c_{20}} \exp\left\{-c_{20}\frac{L}{v^{k}}\right\}$$
Using now (\ref{equ3}) we conclude that there is a constant
$c_{16}$ such that for $L\ge L_0$ one has that

    $$P_{0}(\tilde{T}_{-L}^{l}<T_{L}^{l}) \leq
\frac{1}{c_{16}} \exp\left\{-c_{16}L\exp\left\{-\frac{1}{c_{16}}\log_8^{\left(\left[\frac{k+1}{2}\right]\right)}(L) \right\}\right\}.$$
Choosing  $n(L)=\left[\frac{k+1}{2}\right]$ we conclude the proof.

    Now, we need to prove (\ref{desfinal}). Using Proposition \ref{sznitman}, with $\mathcal{\widetilde{B}}$ and $\mathcal{B}_k$ instead of $\mathcal{B}_1$ and $\mathcal{B}_0$, we have:\\

    $$\mathbb{E}[\widehat{\rho}_{k}^{a_{k+1}}]\leq c_3 \left\{\kappa^{-10c_1 S_1^k}\varphi_k^{\frac{m_k^2}{12}}+\sum\limits_{0\leq j\leq m_{k}+1}\varphi_k^{\frac{m_k+j-1}{2}}\right\}$$

    So that

    $$(\widetilde{S}_2^k)^{d-1} S_1^k \mathbb{E}[\widehat{\rho}_{k}^{a_{k+1}}]\leq c_3 (S_2^k)^{d-1} S_1^k \left\{\kappa^{-10c_1 S_1^k}\varphi_k^{\frac{m_k^2}{12}}+\sum\limits_{0\leq j\leq m_{k}+1}\varphi_k^{\frac{m_k+j-1}{2}}\right\}$$
    From (\ref{mk}) and Lemma \ref{escale} (consequently we can use Lemma \ref{induction}), the following inequalities hold:

    \begin{equation}
    \label{aux1}
\kappa^{-10c_1 S_1^k} \varphi_k^{\frac{m_k^2}{24}} \leq \kappa^{-10c_1 S_1^k} \kappa^{\frac{m_k S_1^k u_k}{24}}\leq 1.
\end{equation}

    Then, inequality (\ref{aux1}) and the fact that $m_k-1\geq \frac{m_k}{2}$, imply that

    $$(\widetilde{S}_2^k)^{d-1} S_1^k  \mathbb{E}[\widehat{\rho}_{k}^{a_{k+1}}] \leq c_3 (\widetilde{S}_2^k)^{d-1} S_1^k \left\{\varphi_k^{\frac{m_k^2}{24}}+ S_1^k \varphi_k^{\frac{m_k}{4}}\right\}.$$

    So that
    $$(\widetilde{S}_2^k)^{d-1} S_1^k  \mathbb{E}[\widehat{\rho}_{k}^{a_{k+1}}] \leq 2c_3 (\widetilde{S}_2^k)^{d-1} (S_1^k)^2 \varphi_k^{\frac{m_k}{8}}\kappa^{u_{k+1}S_1^k}.$$
    Where, it was used the result of Lemma \ref{induction}.
    Finally, note that to finish the proof we have to show that
    \begin{equation}
    \label{last}
    2c_3 (\tilde S^k_2)^{d-1} (S_1^k)^2\varphi_k^{\frac{m_k}{8}}\leq 1.
    \end{equation}
        By our definitions in (\ref{lkformk}),

    $$(\widetilde{S}_2^k)^{d-1} (S_1^k)^2=m_k^{6d-4}\widetilde{L}_k^{d-1}L_k^2.$$
    Therefore, by Lemma \ref{escale} and its consequence Lemma \ref{induction}, the left hand side of inequality (\ref{last}) is smaller than

    $$m_k^{6d-4}\tilde{L}_k^{d-1}L_k^2 \kappa^{u_{k+1}m_k L_k}.$$
    However, as $d$ is fixed, and $k$ is large, it is clear that

    $$\tilde{L}_k^{d-1}L_k^2 \kappa^{\frac{u_{k+1}m_k L_k}{2}} \leq 1$$
    and

    $$c_3 m_k^{6d-4} \kappa^{\frac{u_{k+1}m_k L_k}{2}} \leq 1.$$
    This completes the proof.

    \end{proof}

\medskip

    It is now easy to check that Proposition \ref{final-prop} 
implies  Theorem \ref{theorem1} with the function $\log x$ replaced
by $\log_8 x$. Indeed, note that (\ref{ce1})
is equivalent to the effective criterion.
On the other hand, using the fact that for every $x>0$,
$\log x\ge \log_8 x$, we can then obtain Theorem \ref{theorem1}.

\bigskip
\noindent{\bf Acknowledgments:}
We  thank  A.-S. Sznitman for suggesting that
the decay implied by condition $(T')$ could be improved.


\bigskip
\bigskip
\bigskip
\bigskip
\end{document}